
\documentclass{article}%
\usepackage{fancyhdr}
\usepackage{amsmath}
\usepackage{graphicx}%
\usepackage{amsfonts}%
\usepackage{amssymb}
\newtheorem{theorem}{Theorem}

\newtheorem{example}[theorem]{Example}

\newtheorem{proposition}[theorem]{Proposition}
\newtheorem{remark}[theorem]{Remark}

\newenvironment{proof}[1][Proof]{\textbf{#1.} }{\ \rule{0.5em}{0.5em}}
\pagestyle{fancy}
\lhead[HautGauche]{}
\chead[MonTitre]{Using vector divisions in solving linear complementarity problem}
\rhead[Rigth]{}

\cfoot[moi]{Y. ELFOUTAYENI and  M. KHALADI}
\rfoot[page/pages]{\thepage/11}

\begin{document}

\title{Using vector divisions in solving linear complementarity problem}
\author{Youssef ELFOUTAYENI$^{(1)}$ and Mohamed KHALADI$^{(2)}$\\$^{1}$Computer Sciences Department\\School of Engineering and Innovation of Marrakech\\youssef.elfoutayeni@campusmarrakech.com\\$^{2}$Department of Mathematical\\Faculty Semlalia, University CADI AYYAD\\khaladi@ucam.ac.ma}
\maketitle

\begin{abstract}

\end{abstract}

The linear complementarity problem is to find vector $z$ in $\mathrm{IR}^{n}$
satisfying $z^{T}(Mz+q)=0$, $Mz+q\geqslant0,$ $z\geqslant0$, where $M$ as a
matrix and $q$ as a vector, are given data; this problem becomes in present
the subject of much important research because it arises in many areas and it
includes important fields, we cite for example the linear and nonlinear
programming, the convex quadratic programming and the variational inequalities
problems, ...

It is known that the linear complementarity problem is completely equivalent
to solving nonlinear equation $F(x)=0$ with $F$ is a function from
$\mathrm{IR}^{n}$ into itself defined by $F(x)=(M+I)x+(M-I)|x|+q$. In this
paper we propose a globally convergent hybrid algorithm for solving this
equation; this method is based on an algorithm given by Shi \cite{Y. Shi}, he
uses vector divisions with the secant method; but for using this method we
must have a function continuous with partial derivatives on an open set of
$\mathrm{IR}^{n}$; so we built a sequence of functions $\tilde{F}(p,x)\in
C^{\infty}$ which converges uniformly to the function $F(x)$; and we show that
finding the zero of the function $F$ is completely equivalent to finding the
zero of the sequence of the functions $\tilde{F}(p,x)$. We close our paper
with some numerical simulation examples to illustrate our theoretical results.

\bigskip

\bigskip

\textbf{Key words and phrases}: Linear complementarity problem, Vector
division, Global convergence, \textit{Newton's method, secant method .}

\bigskip

\begin{enumerate}
\item[\textbf{1.}] {\Large Introduction:}

The complementarity problem noted ($CP$) is a classical problem of
optimization theory of finding $(z,w)\in\mathrm{IR}^{n}\times\mathrm{IR}^{n}$
such that:%
\begin{equation}
\left\{
\begin{array}
[c]{l}%
<z,w>=0\\
w-f(z)=0\\
z,w\geqslant0
\end{array}
\right.  \label{(CP)}%
\end{equation}

where $f$, a continuous operator from $\mathrm{IR}^{n}$ into itself, is given data.

The constraint $<z,w>=0$ is called the complementarity condition since for any
$i$, $1\leqslant i\leqslant n,z_{i}=0$ if $w_{i}>0$, and vice versa; it may be
the case that $z_{i}=w_{i}=0$.

In the case that the function $f$ is a nonlinear continuous operator from
$\mathrm{IR}^{n}$ into itself, so the problem is called a \textbf{N}%
on\textbf{L}inear \textbf{C}omplementarity \textbf{P}roblem associated with
the function $f$ and noted $(NLCP)$.

In the case that the function $f$ is affine, i.e%
\[
f(z)=q+Mz,
\]

where $q$ is an element of $\mathrm{IR}^{n}$ and $M$ is a real square matrix
of order $n$, so the problem is called a \textbf{L}inear \textbf{C}%
omplementarity \textbf{P}roblem associated with the matrix $M$ and the vector
$q$ and noted $(LCP)$.

Solving the $(LCP)$ in general, however, appears to be difficult. One simple
reason is that there is no known good characterization of the nonexistence of
a solution to the system for any given function $f$.

The linear complementarity problem plays an important role in several fields
(game theory, operational research ...); moreover, Cottle et
Dantzig\cite{Cottle and Dantzig} et Lemke\cite{C. E. Lemke} have proved that
all problems of linear programming $(LP)$, convex quadratic programming
$(CQP)$, and also the problems of Nash equilibrium of a game bi-matrix can be
written as a linear complementarity problem.

The question is precisely under what conditions on the matrix $M$ and the
vector $q$ this problem admits one and only one solution, if this is the case,
how can we express this solution as a function of the matrix and vector
mentioned above. This question has not been completely resolved yet.

However, many results already exist, for instance Lemke\cite{C. E. Lemke} who
gave sufficient conditions on the matrix $M$ and the vector $q$ under which
the number of solutions of $LCP(M,q)$ is finite. Samelson \cite{H. Samelson
and R. M. Thrall and O. Wesler}, Ingeton\cite{A. W. Ingleton}, Murty\cite{K.
G. Murty1971}, Watson\cite{L. T. Watson}, Kelly \cite{L. M. Kelly and L. T.
Watson} and Cottle \cite{R. W. Cottle and J. S. Pang and R. E. Stone} have by
contrast shown that the matrix $M$ is a $P-matrix$ if and only if the linear
complementarity problem associated with a matrix $M$ and a vector $q$ has a
unique solution for all $q\in\mathrm{IR}^{n}$ (We should remind that a matrix
$M$ is called a $P-matrix$ if all principal minors are strictly positive (see
\cite{M. Fiedler}), and we\ should note that any symmetric and positive
definite matrix is a $P-matrix$, but not vice-versa).

\bigskip

\item[\textbf{2.}] {\Large Preliminaries:}

In this section, we summarize some notations which will be used in this paper.

In particular, $\mathrm{IR}^{n}$ denotes the space of real $n-$dimensional vectors,

$\mathrm{IR}_{+}^{n}:=\{x\in\mathrm{IR}^{n}:x_{i}\geqslant0,i=1..n\}$ is the
nonnegative orthant and its interior is $\mathrm{IR}_{++}^{n}:=\{x\in
\mathrm{IR}^{n}:x_{i}>0,i=1..n\}$.

With $x\in\mathrm{IR}^{n}$ we define $|x|=(|x_{1}|,..,|x_{n}|)^{T}%
\in\mathrm{IR}^{n}$.

We denote by $I$ the identity matrix.

Let $x$, $y\in\mathrm{IR}^{n}$, $x^{T}y$ or $<x,y>$ is the inner product of
the $x$ and $y$; $||x||$ is the Euclidean norm.

For $x\in\mathrm{IR}^{n}$ and $k$ a nonnegative integer, $x^{(k)}$ refers to
the vector obtained after $k$ iterations; for $1\leqslant i\leqslant n$,
$x_{i}$ refers to the $i^{th}$ element of $x$, and $x_{i}^{(k)}$ refers to the
$i^{th}$ element of the vector obtained after $k$ iterations.

Let $x$, $y\in\mathrm{IR}^{n}$, the expression $x\leqslant y$ (respectively
$x<y$) meaning that $x_{i}\leqslant y_{i}$ (respectively $x_{i}<y_{i}$) for
each $i=1..n$.

For $x\in\mathrm{IR}^{n}$ we denote by $e^{x}=(e^{x_{1}},..,e^{x_{n}})^{T}%
\in\mathrm{IR}^{n}$ and for $x\in\mathrm{IR}_{++}^{n}$ we denote by
$\ln(x)=(\ln(x_{1}),..,\ln(x_{n}))^{T}$.

The transpose of a vector is denoted by super script $T$, such as the
transpose of the vector $x$ is given by $x^{T}$.

Remember that the spectrum $\sigma(A)$ of the matrix $A$ is the set of its
eigenvalues and its spectral radius $\rho$ is given by: $\rho(A):=\sup
\{|\lambda|$ such that $\lambda\in\sigma(A)\}$.

\bigskip

\item[\textbf{3.}] {\Large Equivalent reformulation of LCP:}

It is known in \cite{K. G. Murty 1988} that the linear complementarity problem
$LCP(M,q)$ is completely equivalent to solving nonlinear equation
\[
F(x)=0
\]
with $F$ is a function from $\mathrm{IR}^{n}$ into itself defined by
\[
F(x):=(M+I)x+(M-I)|x|+q;
\]

More precisely (see \cite{K. G. Murty 1988}), on the one hand, if $x^{\ast}$
is a zero of the function $F$, then%
\begin{equation}
\left\{
\begin{array}
[c]{c}%
z^{\ast}:=|x^{\ast}|+x^{\ast}\\
w^{\ast}:=|x^{\ast}|-x^{\ast}%
\end{array}
\right.  \label{Trans}%
\end{equation}
define a solution of $LCP(M,q)$.

On the other hand, if $(z^{\ast},w^{\ast})$ is a solution of $LCP(M,q)$, then
\[
x^{\ast}:=\frac{z^{\ast}-w^{\ast}}{2}%
\]
is a zero of the function $F$.

We mention that this equation is solved by the fixed point algorithm (see
\cite{U. Schafer}), this algorithm is defined by:%
\begin{equation}
\left\{
\begin{array}
[c]{l}%
x^{(0)}\in\mathrm{IR}^{n}\text{ arbitrary,}\\
x^{(k+1)}=(I+M)^{-1}(I-M)|x^{(k)}|-(I+M)^{-1}q
\end{array}
\right.  \label{Alg U. Schafer}%
\end{equation}
For the case that $M$ is symetric and positive definite, it was shown in
\cite{W. M. G. Van Bokhoven 1981} (see also Section 9.2 in \cite{K. G. Murty
1988}) that
\[
D:=(I+M)^{-1}(I-M)
\]
it holds%
\[
||D||_{2}=\sqrt{\rho(D^{T}D)}=\sqrt{\rho(D^{2})}=\rho(D)<1,
\]
where $\rho(.)$ denotes the spectral radius of a matrix; hence (\ref{Alg U.
Schafer}) converges by the contraction-mapping theorem (see Theorem 5.1.3 in
\cite{J. M. Ortega}) and
\[
x^{\ast}=\lim\limits_{k\rightarrow+\infty}x^{(k)}%
\]
is the unique solution of the $F(x)=0$.

Therefore,
\[
w^{\ast}:=|x^{\ast}|-x^{\ast}%
\]
and%
\[
z^{\ast}:=|x^{\ast}|+x^{\ast}%
\]
define the unique solution of the $LCP(M,q)$.

We mention also that the convergence of algorithm (\ref{Alg U. Schafer}) is
only linear; in this paper, we consider the use of vector divisions with the
secant method (see \cite{Y. Shi}) in instead of the algorithm (\ref{Alg U.
Schafer}) decribed above, this algorithm has global convergence (see \cite{Y.
Shi}); but for using this algorithm we must have a function $F$ a continuous
with partial derivatives on a set of $\mathrm{IR}^{n}$; the next section can
answer this problematic.

\item[\textbf{4. }] {\Large The main result}

We consider again the function $F$ $:\mathrm{IR}^{n}\rightarrow$
$\mathrm{IR}^{n}$ defined by%
\[
F(x):=(M+I)x+(M-I)|x|+q
\]
and let's consider the sequence of functions $\tilde{F}:\mathrm{IN}^{\ast
}\mathrm{\times IR}^{n}\rightarrow$ $\mathrm{IR}^{n}$defined by%
\[
\tilde{F}(p,x):=(M+I)x+\frac{1}{p}(M-I)\ln(e^{0}+e^{px}+e^{-px})+q
\]

\begin{proposition}
: $\tilde{F}(p,x)$ converges uniformly to $F(x)$ when $p\rightarrow+\infty$.
\end{proposition}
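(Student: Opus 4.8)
The plan is to reduce the claim to a scalar, pointwise estimate on the quantity $\frac{1}{p}\ln(1+e^{pt}+e^{-pt}) - |t|$, and then show this estimate is uniform in $t \in \mathrm{IR}$. First I would subtract the two functions: since the terms $(M+I)x$ and $q$ cancel, we get
\[
\tilde{F}(p,x) - F(x) = (M-I)\left(\tfrac{1}{p}\ln(e^{0}+e^{px}+e^{-px}) - |x|\right),
\]
where the logarithm and absolute value act componentwise. By submultiplicativity of the matrix norm, $\|\tilde{F}(p,x)-F(x)\| \leqslant \|M-I\|\cdot\left\|\tfrac{1}{p}\ln(e^{0}+e^{px}+e^{-px}) - |x|\right\|$, so it suffices to bound the vector on the right uniformly in $x$. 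Because this vector has components $g_p(x_i) := \frac{1}{p}\ln(1+e^{px_i}+e^{-px_i}) - |x_i|$, and the Euclidean norm of a vector of $n$ such terms is at most $\sqrt{n}$ times the largest, everything comes down to the scalar bound $\sup_{t\in\mathrm{IR}} |g_p(t)|$.

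Next I would establish the scalar estimate. Writing $g_p(t) = \frac{1}{p}\ln\big(e^{-p|t|}(1+e^{px_i}+e^{-px_i})\big)$ after factoring $e^{p|t|}$ out of the argument (note $e^{p|t|} = \max(e^{pt},e^{-pt})$), the argument of the log becomes $1 + e^{-p|t|} + e^{-p|t|}e^{-p|t|}\cdots$; more cleanly, $1+e^{pt}+e^{-pt} = e^{p|t|}(1 + e^{-2p|t|} + e^{-p|t|})$, so
\[
g_p(t) = \tfrac{1}{p}\ln\!\big(1 + e^{-p|t|} + e^{-2p|t|}\big).
\]
Since $0 \le e^{-p|t|}+e^{-2p|t|} \le 2$ for all $t$, we get $0 \le g_p(t) \le \frac{1}{p}\ln 3$, which is independent of $t$ and tends to $0$ as $p\to+\infty$. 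This is the crux of the argument and also the only step requiring a small algebraic manipulation; the rest is bookkeeping.

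Finally, combining the two steps yields $\sup_{x\in\mathrm{IR}^{n}} \|\tilde{F}(p,x) - F(x)\| \leqslant \|M-I\|\,\sqrt{n}\,\frac{\ln 3}{p} \to 0$ as $p\to+\infty$, which is precisely uniform convergence. The main obstacle, such as it is, is recognizing the factorization $1+e^{pt}+e^{-pt} = e^{p|t|}(1+e^{-p|t|}+e^{-2p|t|})$ that makes the $|t|$ term cancel exactly and exposes a bounded remainder; once that is in hand the proof is immediate. I would also remark in passing that $g_p \ge 0$ shows the approximation is one-sided (it overestimates $|x|$ componentwise), which may be convenient later, though it is not needed for uniform convergence itself.
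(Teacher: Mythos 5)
Your proof is correct and follows essentially the same route as the paper: factor $e^{p|t|}$ out of $1+e^{pt}+e^{-pt}$ to obtain the uniform scalar bound $0\leqslant \frac{1}{p}\ln(1+e^{pt}+e^{-pt})-|t|\leqslant \frac{1}{p}\ln 3$, then push this through the linear operator $(M-I)$. The only additions are cosmetic (the explicit $\|M-I\|\sqrt{n}\,\frac{\ln 3}{p}$ constant and the one-sidedness remark), plus a harmless typo where $x_i$ appears in place of $t$ in the scalar computation.
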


\begin{proof}
: To show that we can start by
\begin{align*}
\frac{1}{p}\ln(e^{0}+e^{px}+e^{-px})-|x|  &  =\frac{1}{p}[\ln(e^{0}%
+e^{px}+e^{-px})+\ln(e^{-p|x|})]\\
&  =\frac{1}{p}\ln[e^{-p|x|}\ast(1+e^{px}+e^{-px})]\\
&  =\frac{1}{p}\ln(e^{-p|x|}+e^{p(x-|x|)}+e^{-p(x+|x|)})
\end{align*}
then we have%
\[
0\leqslant\frac{1}{p}\ln(e^{0}+e^{px}+e^{-px})-|x|\leqslant\frac{1}{p}\ln(3)
\]
so
\[
\frac{1}{p}\ln(e^{0}+e^{px}+e^{-px})
\]
is uniform convergence to $|x|$ when $p\rightarrow+\infty$;

Moreover, the operator $(M-I)$ is linear then we have
\[
\frac{1}{p}(M-I)\ln(e^{0}+e^{px}+e^{-px})\text{ converges uniformly to
}(M-I)|x|\text{ as }p\rightarrow+\infty
\]
and from the expression of the sequence of the functions $\tilde{F}$ and the
function $F$ we have
\[
\tilde{F}(p,x)\text{ converges uniformly to}\ F(x)\text{ when }p\rightarrow
+\infty.
\]
\end{proof}

\begin{theorem}
: Let $x^{\ast}(p)$ be a solution of the equation $\tilde{F}(p,x)=0,$ then
$x^{\ast}(p)$ is an approximation solution of $F(x)=0$ for $p$ is large enough.
\end{theorem}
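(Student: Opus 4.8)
The plan is to exploit the uniform convergence established in the Proposition together with some mild coercivity/boundedness control on the zero set $\{x^{\ast}(p)\}$, and then pass to the limit. First I would argue that the family of solutions $x^{\ast}(p)$ stays in a fixed bounded region of $\mathrm{IR}^{n}$ independent of $p$: since $\tilde{F}(p,x) = (M+I)x + \tfrac{1}{p}(M-I)\ln(e^{0}+e^{px}+e^{-px}) + q$ and the middle term differs from $(M-I)|x|$ by a vector whose norm is at most $\tfrac{1}{p}\|M-I\|\ln 3$, the equation $\tilde{F}(p,x^{\ast}(p))=0$ forces $F(x^{\ast}(p))$ to have norm at most $\tfrac{1}{p}\|M-I\|\sqrt{n}\ln 3$, which tends to $0$. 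Under the standing hypotheses guaranteeing $F(x)=0$ has a unique solution (e.g. $M$ a $P$-matrix, or $M$ symmetric positive definite as in Section 3), the map $F$ is a homeomorphism, so $\|F(x)\|$ small forces $x$ near $x^{\ast}$; in particular the $x^{\ast}(p)$ are bounded.

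Next I would extract a convergent subsequence $x^{\ast}(p_{k}) \to \bar x$. Write
\[
F(\bar x) = \big(F(\bar x) - F(x^{\ast}(p_{k}))\big) + \big(F(x^{\ast}(p_{k})) - \tilde{F}(p_{k},x^{\ast}(p_{k}))\big) + \tilde{F}(p_{k},x^{\ast}(p_{k})).
\]
The third term is $0$ by definition of $x^{\ast}(p_{k})$; the second term is bounded in norm by $\sup_{x}\|F(x)-\tilde F(p_{k},x)\| \le \tfrac{1}{p_{k}}\|M-I\|\sqrt{n}\ln 3 \to 0$ by the Proposition's uniform estimate; and the first term tends to $0$ by continuity of $F$. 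Hence $F(\bar x)=0$, and by uniqueness $\bar x = x^{\ast}$. Since every subsequence of the bounded family $\{x^{\ast}(p)\}$ has a further subsequence converging to the same limit $x^{\ast}$, we conclude $x^{\ast}(p) \to x^{\ast}$ as $p \to +\infty$. To make the ``approximation solution'' statement quantitative, I would then combine the estimate $\|F(x^{\ast}(p))\| \le \tfrac{1}{p}\|M-I\|\sqrt{n}\ln 3$ with a local stability bound for $F$ near $x^{\ast}$ (from invertibility of the relevant generalized Jacobian, or directly from the contraction estimate $\|D\|_{2}<1$ in the symmetric positive definite case) to get $\|x^{\ast}(p) - x^{\ast}\| \le \tfrac{C}{p}$ for an explicit constant $C$, so that choosing $p$ large makes the error as small as desired.

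The main obstacle I anticipate is the boundedness/existence step rather than the limiting argument: a priori it is not obvious that $\tilde{F}(p,\cdot)=0$ even has a solution, nor that the solutions do not escape to infinity as $p$ grows. The cleanest route is to note that $\tilde F(p,x)-F(x)$ is uniformly bounded in both $p$ and $x$, so $\tilde F(p,\cdot)$ is a bounded perturbation of the surjective, proper (indeed homeomorphic) map $F$; a degree-theory or proper-map argument then yields a zero in a ball whose radius is controlled uniformly in $p$. In the symmetric positive definite setting one can be fully explicit: rewrite $\tilde F(p,x)=0$ as the fixed-point equation $x = D\,\big(\tfrac{1}{p}\ln(e^{0}+e^{px}+e^{-px})\big) - (I+M)^{-1}q$, observe that $g(p,x):=\tfrac{1}{p}\ln(e^{0}+e^{px}+e^{-px})$ is nonexpansive in $x$ for each fixed $p$ (its Jacobian is diagonal with entries in $[0,1]$), so the right-hand side is a contraction with the same modulus $\|D\|_{2}<1$, giving a unique $x^{\ast}(p)$ and, via the uniform $\ln 3$ estimate, the bound $\|x^{\ast}(p)-x^{\ast}\|\le \tfrac{\|D\|_{2}\sqrt n \ln 3}{p\,(1-\|D\|_{2})}$. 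I would present the general argument first and then remark on this explicit special case.
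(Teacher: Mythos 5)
Your proposal is correct, but it proves considerably more than the paper does, and only its opening step coincides with the paper's argument. The paper's entire proof is the single observation that, since $\tilde{F}(p,x^{\ast}(p))=0$,
\[
\Vert F(x^{\ast}(p))\Vert=\Vert F(x^{\ast}(p))-\tilde{F}(p,x^{\ast}(p))\Vert\leqslant\sup_{x}\Vert F(x)-\tilde{F}(p,x)\Vert\leqslant\epsilon
\]
for $p$ large, by the uniform convergence of the Proposition; that is, the paper reads ``approximation solution'' as ``small residual'' and stops there. You make exactly this estimate (with the explicit constant $\Vert M-I\Vert\sqrt{n}\ln 3/p$, which is a nice sharpening), but you then interpret the statement in the stronger sense that $x^{\ast}(p)$ is actually close to the exact zero $x^{\ast}$, and you supply the missing ingredients for that: a stability bound coming from the invertibility of $F$ (via the $P$-matrix or symmetric positive definite hypothesis), the existence and uniqueness of $x^{\ast}(p)$ through a contraction reformulation $x=D\,g(p,x)-(I+M)^{-1}q$ with $g$ nonexpansive, and the explicit rate $\Vert x^{\ast}(p)-x^{\ast}\Vert\leqslant\Vert D\Vert_{2}\sqrt{n}\ln 3/(p(1-\Vert D\Vert_{2}))$. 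These additions are mathematically sound (note only that the stability step genuinely requires the standing assumption that $M$ is a $P$-matrix or symmetric positive definite, so that $F$ is a piecewise-linear homeomorphism with continuous inverse; the theorem as stated in the paper does not make this explicit, it is only implicit in the Remark on uniqueness and in Section 3). In short: what the paper buys is brevity at the cost of a weak conclusion (small residual only); what your route buys is the statement one actually wants, namely $x^{\ast}(p)\rightarrow x^{\ast}$ with an $O(1/p)$ rate, plus a justification that the approximating equations are solvable at all.
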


\begin{proof}
:To show that, we use proposition(1) wich we can interpret as $\forall
\epsilon>0,$ $\exists$ $p^{\ast}>0$ such that for all $p>p^{\ast}$ we have
\begin{align*}
||F(x^{\ast}(p))||  &  =||F(x^{\ast}(p))-\tilde{F}(p,x^{\ast}(p))||\\
&  \leqslant\epsilon
\end{align*}
then we have for any $\epsilon>0$, $x^{\ast}(p)$ is the approximation solution
of
\[
F(x)=0.
\]
\end{proof}

\begin{remark}
: The uniqueness of the root of the function $F$ results from the uniqueness
of the solution of linear complementarity problem $LCP(M,q)$, in fact,
supposing that $x_{1}^{\ast}$ and $x_{2}^{\ast}$, two distinct roots of the
function $F$,exist, then%
\begin{align*}
z_{1}^{\ast}  &  :=|x_{1}^{\ast}|+x_{1}^{\ast}.\\
z_{2}^{\ast}  &  :=|x_{2}^{\ast}|+x_{2}^{\ast}.
\end{align*}
Since $z_{1}^{\ast}=z_{2}^{\ast}$ (uniqueness of the solution of $LCP(M,q)$)
then
\begin{equation}
|x_{1}^{\ast}|+x_{1}^{\ast}=|x_{2}^{\ast}|+x_{2}^{\ast}.
\label{|x1+a|-(x1+a)=|x2+a|-(x2+a).}%
\end{equation}
Simularly, we use the same method for $w_{1}^{\ast}$ and $w_{2}^{\ast}$ we
have
\begin{equation}
|x_{1}^{\ast}|-x_{1}^{\ast}=|x_{2}^{\ast}|-x_{2}^{\ast}.
\label{|x1+a|+(x1+a)=|x2+a|+(x2+a).}%
\end{equation}
so $(\ref{|x1+a|-(x1+a)=|x2+a|-(x2+a).})-$($\ref{|x1+a|+(x1+a)=|x2+a|+(x2+a).}%
)$ means that $x_{1}^{\ast}=x_{2}^{\ast}$.
\end{remark}

Now, we give the following algorithm for solving $\tilde{F}(p,x)=0$ (see
\cite{Y. Shi}):

\textbf{Algorithm:}

\begin{enumerate}
\item[Step 0:] Determine $\epsilon$, $p$, $k^{\ast}$, $\rho,$ $\sigma$ such
that $k^{\ast}$ is a positive integer,

$0<\rho<1/2$, $\ $and $\rho<\sigma<1$;

\item[Step 1:] Select two points $x^{(0)\text{ }}$ and $x^{(1)}\in
\mathrm{IR}^{n}$;

\item[Step 2:] for $k=1,2,...$ until termination, do the following:

\begin{enumerate}
\item[1-] Compute the steepset descent direction
\[
d^{(k)}:=-J(p,x^{(k)})^{T}\tilde{F}(p,x^{(k)}),
\]
where
\[
\tilde{F}(p,x):=(M+I)x+\frac{1}{p}(M-I)\ln(e^{0}+e^{px}+e^{-px})+q;
\]%
\[
J(p,x):=(M+I)+(M-I)E(p,x)
\]
and
\[
E_{ij}(p,x):=\delta_{ij}\frac{e^{px_{i}}-e^{-px_{i}}}{1+e^{px_{i}}+e^{-px_{i}%
}};
\]
we racall that $\delta_{ii}=1$ and $\delta_{ij}=0$ if $i\neq j$.

\item[2-] if $k$ equals a multiple of $k^{\ast}$, then insert a steepset
descent direction step, that is, let $s^{(k)}:=d^{(k)}$ and go to step 2.7;

\item[3-] Compute:%
\[
\left\{
\begin{array}
[c]{l}%
u^{(k)}:=\xi_{1}\tilde{F}(p,x^{(k)})\\
v^{(k)}:=\xi_{2}(x^{(k)}-x^{(k-1)})
\end{array}
\right.
\]
with%
\[
\xi_{1}=-\frac{||x^{(k)}-x^{(k-1)}||^{2}}{<x^{(k)}-x^{(k-1)},\tilde
{F}(p,x^{(k)})-\tilde{F}(p,x^{(k-1)})>}%
\]
and%
\[
\xi_{2}=-\frac{<\tilde{F}(p,x^{(k)})-\tilde{F}(p,x^{(k-1)}),\tilde
{F}(p,x^{(k)})>}{||\tilde{F}(p,x^{(k)})-\tilde{F}(p,x^{(k-1)})||^{2}}.
\]

\item[4-] if $(u^{(k)}-v^{(k)})^{T}d^{(k)}\neq0$, then choose%
\[
\alpha^{(k)}>\frac{-<v^{(k)},d^{(k)}>}{<u^{(k)}-v^{(k)},d^{(k)}>}%
\]
such that $\alpha^{(k)}$ maximizes the value of
\[
Cos[s^{(k)},d^{(k)}]=\frac{<s^{(k)},d^{(k)}>}{||s^{(k)}||.||d^{(k)}||};
\]
Set
\[
s^{(k)}:=\alpha^{(k)}u^{(k)}+(1-\alpha^{(k)})v^{(k)}%
\]
and go to step 2.7;

\item[5-] if $(u^{(k)}-v^{(k)})^{T}d^{(k)}=0$ and $<v^{(k)},d^{(k)}>>0$, then
set
\[
s^{(k)}:=\frac{u^{(k)}+v^{(k)}}{2}%
\]
and go to step 2.7;

\item[6-] if $(u^{(k)}-v^{(k)})^{T}d^{(k)}=0$ and $<v^{(k)},d^{(k)}%
>\leqslant0$, then set $s^{(k)}:=d^{(k)}$ and go to step 2.7;

\item[7-] take a line search along the direction $s^{(k)}$ to determine the
step length $\gamma$ suth that%
\[
f(p,x^{(k)}+\gamma s^{(k)})\leqslant f(p,x^{(k)})-\gamma\rho<d^{(k)},s^{(k)}>
\]
and%
\[
<\bigtriangledown f(p,x^{(k)}+\gamma s^{(k)}),s^{(k)}>\geqslant-\sigma
<d^{(k)},s^{(k)}>;
\]
where%
\[
f(p,x)=\frac{1}{2}||\tilde{F}(p,x)||^{2}%
\]

\item[8-] set $x^{(k+1)}:=x^{(k)}+\gamma s^{(k)}$ and go to next iteration.
\end{enumerate}
\end{enumerate}

\item[\textbf{5.}] {\Large Numerical examples}

In this part, we consider some examples to test our method. The results of
Fixed Point and using vector divisions methods for these examples are
presented here for comparison purposes. The results and expected solutions for
each example have been presented on the Tables 1 and 2.

\begin{example}
: Consider the following linear complementarity problem:

Find vector $z$ in $\mathrm{IR}^{n}$ satisfying $z^{T}(Mz+q)=0$,
$Mz+q\geqslant0,$ $z\geqslant0$,

where $M=\left[
\begin{array}
[c]{cccc}%
4 & -1 & 0 & 0\\
-1 & 4 & -1 & 0\\
0 & -1 & 4 & -1\\
0 & 0 & -1 & 4
\end{array}
\right]  $ and $q=\left[
\begin{array}
[c]{c}%
-4\\
3\\
-4\\
2
\end{array}
\right]  $.

The exact solutions is $x^{\ast}=(1,0,1,0)^{T}.$

We apply the fixed point and using vector divisions methods to solve this
example with the initial approximation $x^{(0)}=(1.1,0.1,1.2,0.2)^{T}$.

The solution of this problem with six significant digits is presented in
Table1.
\[%
\begin{tabular}
[c]{llllll}\hline
& Iteration & $\ \ \ \ \ \ z_{1}$ & $\ \ \ \ \ \ z_{2}$ & $\ \ \ \ \ \ z_{3}$
& $\ \ \ \ \ \ z_{4}$\\\cline{2-6}
& k=01 & 0,0000000 & 0,0000000 & 0,0000000 & 0,0000000\\\cline{2-6}
& k=05 & 0,7883251 & 0,0000000 & 1,3448593 & 0,0000000\\\cline{2-6}
& k=10 & 1,0197946 & 0,0000000 & 0,9884737 & 0,0000000\\\cline{2-6}%
Fixed & k=15 & 0,9985643 & 0,0000000 & 1,0012907 & 0,0000000\\\cline{2-6}%
point & k=20 & 1,0001030 & 0,0000000 & 0,9999377 & 0,0000000\\\cline{2-6}%
method & k=25 & 0,9999918 & 0,0000000 & 1,0000058 & 0,0000000\\\cline{2-6}
& k=30 & 1,0000005 & 0,0000000 & 0,9999997 & 0,0000000\\\cline{2-6}
& k=33 & 0,9999999 & 0,0000000 & 1,0000001 & 0,0000000\\\cline{2-6}
& k=34 & 1,0000001 & 0,0000000 & 1,0000000 & 0,0000000\\\cline{2-6}
& k=35 & 1,0000000 & 0,0000000 & 1,0000000 & 0,0000000\\\hline
&  &  &  &  & \\\hline
Using vector & k=01 & 0,0000000 & 0,0000000 & 0,0000000 &
0,0000000\\\cline{2-6}%
divisions & k=02 & 4,0000000 & 0,0000000 & 4,0000000 & 0,0000000\\\cline{2-6}%
method & k=03 & 1,0000000 & 0,0000000 & 1,0000000 & 0,0000000\\\hline
\end{tabular}
\]
Table 1: The results of different methods for example1.
\end{example}

\begin{example}
: Let's solve the following linear complementarity problem

Find vector $z$ in $\mathrm{IR}^{n}$ satisfying $z^{T}(Mz+q)=0$,
$Mz+q\geqslant0,$ $z\geqslant0$,

where $M=\left[
\begin{array}
[c]{cccc}%
8 & -1 & 0 & -5\\
1 & 5 & -1 & 0\\
2 & -1 & 6 & -1\\
6 & 0 & -1 & 7
\end{array}
\right]  $ and $q=\left[
\begin{array}
[c]{c}%
1\\
-2\\
-3\\
4
\end{array}
\right]  $.

The exact solutions is $x^{\ast}=(0,\frac{15}{29},\frac{17}{29},0)^{T}.$

We apply the fixed point and using vector divisions methods to solve this
example with the initial approximation $x^{(0)}=(-1,-2,-3,-4)^{T}$.

The solution of this problem with six significant digits is presented in
Table2.%
\[%
\begin{tabular}
[c]{llllll}\hline
& Iteration & $\ \ \ \ \ \ z_{1}$ & $\ \ \ \ \ \ z_{2}$ & $\ \ \ \ \ \ z_{3}$
& $\ \ \ \ \ \ z_{4}$\\\cline{2-6}
& k=01 & 0,0000000 & 0,0000000 & 0,0000000 & 0,0000000\\\cline{2-6}
& k=05 & 0,1115059 & 0,6300237 & 0,3146258 & 0,0000000\\\cline{2-6}
& k=10 & 0,0000000 & 0,4906849 & 0,6581657 & 0,0000000\\\cline{2-6}
& k=15 & 0,0000000 & 0,5309163 & 0,5863067 & 0,0000000\\\cline{2-6}
& k=20 & 0,0000000 & 0,5192743 & 0,5936312 & 0,0000000\\\cline{2-6}
& k=25 & 0,0000000 & 0,5188339 & 0,5869623 & 0,0000000\\\cline{2-6}
& k=30 & 0,0000000 & 0,5173795 & 0,5862876 & 0,0000000\\\cline{2-6}%
Fixed & k=35 & 0,0000000 & 0,5171712 & 0,5859293 & 0,0000000\\\cline{2-6}%
point & k=40 & 0,0000000 & 0,5171411 & 0,5860432 & 0,0000000\\\cline{2-6}%
method & k=45 & 0,0000000 & 0,5171911 & 0,5861363 & 0,0000000\\\cline{2-6}
& k=50 & 0,0000000 & 0,5172239 & 0,5861902 & 0,0000000\\\cline{2-6}
& k=55 & 0,0000000 & 0,5172388 & 0,5862079 & 0,0000000\\\cline{2-6}
& k=60 & 0,0000000 & 0,5172428 & 0,5862107 & 0,0000000\\\cline{2-6}
& k=65 & 0,0000000 & 0,5172428 & 0,5860930 & 0,0000000\\\cline{2-6}
& k=70 & 0,0000000 & 0,5172421 & 0,5862079 & 0,0000000\\\cline{2-6}
& k=75 & 0,0000000 & 0,5172416 & 0,5862071 & 0,0000000\\\cline{2-6}
& k=79 & 0,0000000 & 0,5172424 & 0,5862069 & 0,0000000\\\hline
&  &  &  &  & \\\hline
& k=01 & 0,0000000 & 2,0000000 & 3,0000000 & 0,0000000\\\cline{2-6}
& k=02 & 0,0000000 & 0,0000000 & 0,4367816 & 0,0000000\\\cline{2-6}%
Using vector & k=03 & 0,0000000 & 2,6264367 & 0,5000000 &
0,0000000\\\cline{2-6}%
divisions & k=04 & 0,0000000 & 0,6983749 & 0,6163958 & 0,0000000\\\cline{2-6}%
method & k=05 & 0,0000000 & 0,5172414 & 0,5862069 & 0,0000000\\\cline{2-6}
& k=06 & 0,0000000 & 0,5172414 & 0,5862069 & 0,0000000\\\hline
\end{tabular}
\]
Table 2: The results of different methods for example2.
\end{example}
\end{enumerate}

\begin{center}
\textbf{Conclusion:}
\end{center}

In this paper we have used that the linear complementarity problem is
completely equivalent to solving nonlinear equation $F(x)=0$ with $F$ is a
function from $\mathrm{IR}^{n}$ into itself defined by
$F(x)=(M+I)x+(M-I)|x|+q$; for solving this equation we have used the Shi's
method, this method uses vector divisions with the secant method; based on
that, we have proposed a globally convergent hybrid algorithm for solving this
equation; to do so, we had to build a sequence of functions $\tilde{F}(p,x)\in
C^{\infty}$ which converges uniformly to the function $F(x)$; and we have
shown that finding the zero of the function $F$ is completely equivalent to
finding the zero of the sequence of the functions $\tilde{F}$.

\bigskip

\end{document}